\crefname{section}{Section}{Sections}
\crefname{subsection}{\S}{\S\S}
\crefname{subsubsection}{\S}{\S\S}
\theoremstyle{plain}
\newtheorem{lemma}{Lemma}[section]
\newtheorem{proposition}[lemma]{Proposition}
\newtheorem{theorem}[lemma]{Theorem}
\theoremstyle{nonumberplain}
\newtheorem{theoremN}{Theorem}
\theoremstyle{plain}
\newtheorem{definition}[lemma]{Definition}
\newtheorem{remark}[lemma]{Remark}
\crefname{definition}{definition}{definitions}
\crefname{ex}{example}{examples}
\crefname{remark}{remark}{remarks}
\crefname{convention}{convention}{conventions}
\crefname{notation}{notation}{notations}
\crefname{table}{table}{tables}
\crefname{lemma}{lemma}{lemmas}
\crefname{proposition}{proposition}{propositions}
\crefname{corollary}{corollary}{corollaries}
\crefname{theorem}{theorem}{theorems}
\crefname{enumi}{}{}
\crefname{assumption}{assumption}{Assumptions}
\crefname{equation}{}{}
\numberwithin{equation}{section}
\theoremstyle{nonumberplain}
\newtheorem{proof}{Proof}
\newcommand\pf[1]{\newtheorem{#1}{Proof of \Cref{#1}}}
\newcommand\bC{{\mathbb C}}
\newcommand\bR{{\mathbb R}}
\newcommand\bS{{\mathbb S}}
\newcommand\bZ{{\mathbb Z}}
\newcommand\cC{{\mathcal C}}
\DeclareMathOperator{\id}{id}
\DeclareMathOperator{\End}{\mathrm{End}}
\DeclareMathOperator{\Aut}{\mathrm{Aut}}
\newcommand{\cat}[1]{\textsc{#1}}
\newcommand{\qedhere}{\mbox{}\hfill\ensuremath{\blacksquare}}
\title{Naturality and innerness for morphisms of compact groups and (restricted) Lie algebras} \author{Alexandru Chirvasitu}
\begin{document}

\date{}

\newcommand{\Addresses}{{
  \bigskip
  \footnotesize

  \textsc{Department of Mathematics, University at Buffalo, Buffalo,
    NY 14260-2900, USA}\par\nopagebreak \textit{E-mail address}:
  \texttt{achirvas@buffalo.edu}

}}

\maketitle

\begin{abstract}
  An extended derivation (endomorphism) of a (restricted) Lie algebra $L$ is an assignment of a derivation (respectively) of $L'$ for any (restricted) Lie morphism $f:L\to L'$, functorial in $f$ in the obvious sense. We show that (a) the only extended endomorphisms of a restricted Lie algebra are the two obvious ones, assigning either the identity or the zero map of $L'$ to every $f$; and (b) if $L$ is a Lie algebra in characteristic zero or a restricted Lie algebra in positive characteristic, then $L$ is in canonical bijection with its space of extended derivations (so the latter are all, in a sense, inner). These results answer a number of questions of G. Bergman.

  In a similar vein, we show that the individual components of an extended endomorphism of a compact connected group are either all trivial or all inner automorphisms.
\end{abstract}

\noindent {\em Key words: Lie algebra; restricted Lie algebra; enveloping algebra; Hopf algebra; primitive element; comma category; inner; derivation; compact group; Lie group; coproduct; cocomplete; Bohr compactification}

\vspace{.5cm}

\noindent{MSC 2020: 16S30; 16T05; 16W25; 17B40; 22C05; 22D45; 16S10; 22E60}


\section*{Introduction}

This note was prompted by a number of questions posed in \cite{bg}. That paper revolves around the notion of {\it innerness} for automorphisms, endomorphisms or other classes of maps between algebraic structures.

One intriguing piece of insight is that innerness (whatever it means it any given context) is automatic for {\it extended} morphisms (auto, endo, etc.): per \cite[Definition 4]{bg}, an extended endomorphism of an object $c\in \cC$ of a category is an endomorphism of the forgetful functor
\begin{equation}\label{eq:fgt}
  c\downarrow \cC\ni (c\to d)\xmapsto{\quad U_{c,\cC}\quad}d\in \cC
\end{equation}
from the {\it comma category} $c\downarrow \cC$ consisting of morphisms in $\cC$ with domain $c$ (\cite[\S II.6]{mcl}, \cite[Exercise 3K]{ahs}); the same goes for {\it auto}morphisms. The paradigmatic results (\cite[Theorems 1 and 2, Corollary 3]{bg}) say that for the category $\cat{Gp}$ of groups and $G\in \cat{Gp}$
\begin{itemize}
\item The morphism attaching to $g\in G$ the natural automorphism of $U_{G,\cat{Gp}}$ (notation as in \Cref{eq:fgt}) operating as conjugation by $f(g)$ for any
  \begin{equation}\label{eq:samplef}
    \left(G\xrightarrow{\quad f\quad}H\right)\in G\downarrow \cat{Gp}
  \end{equation}
  is an isomorphism $G\cong \mathrm{Aut}(U_{G,\cat{Gp}})$.
\item The only other extended {\it endo}morphism of $U_{G,\cat{Gp}}$ is the one operating trivially on $H$ for every \Cref{eq:samplef}.   
\end{itemize}

The specific questions that motivated this work are concentrated in \cite[\S 8]{bg}, which retraces some of this in the context of (restricted \cite[\S V.7 Definition 4]{jc}) Lie algebras and their {\it derivations}. The category $\cC$ of \Cref{eq:fgt} is now that of Lie algebras over a field (perhaps restricted, when in positive characteristic), and along with endomorphisms of \Cref{eq:fgt} one similarly considers {\it extended derivations} \cite[Definition 10]{bg} of a Lie algebra $L$:
\begin{itemize}
\item a derivation $\partial_f$ of the Lie algebra $L'$ for every morphism $f:L\to L'$;
\item with the $\partial_f$ satisfying the obvious compatibility conditions, analogous to those required of extended morphisms.
\end{itemize}

With that in place,
\begin{enumerate}[(a)]
\item (A paraphrase of) \cite[Question 11]{bg} asks whether the extended derivations of a Lie algebra in characteristic 0 are precisely those of the form
  \begin{equation}\label{eq:extderiv}
    \partial_{a,f}(l') := [f(a),l'],\ \forall f:L\to L',\ \forall l'\in L'
  \end{equation}
  for $a\in L$;
\item And similarly for {\it restricted} Lie algebras in positive characteristic;
\item While \cite[Question 13]{bg} asks whether restricted positive-characteristic Lie algebras have any non-obvious extended endomorphisms (the obvious assigning the identity or, respectively, the zero map on $L'$ to every $L\to L'$).
\end{enumerate}

We answer these in \Cref{th:q11,th:q13}:

\begin{theoremN}
  Let $L$ be either a Lie algebra over a field $\Bbbk$, or a restricted Lie algebra when $\Bbbk$ has positive characteristic.
  \begin{enumerate}[(a)]
  \item If $\mathrm{char}~\Bbbk=0$, the map sending $a\in L$ to the extended derivation $(\partial_{a,f})_f$ of \Cref{eq:extderiv} is an isomorphism between $L$ and the linear space of extended derivations of $L$. 
  \item The same holds in positive characteristic for restricted Lie algebras.
  \item\label{item:done} For Lie algebras (regardless of characteristic) the only extended endomorphisms are the two obvious ones:
    \begin{align*}
      \left(L\to L'\right)&\xmapsto{\quad}\id_{L'}\quad\text{and}\\
      \left(L\to L'\right)&\xmapsto{\quad}0.
    \end{align*}
  \item The analogous statement holds for restricted Lie algebras in positive characteristic.
  \end{enumerate}
  \qedhere
\end{theoremN}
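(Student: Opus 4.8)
\emph{Strategy and reduction to a universal element.} The plan is to exploit the cocompleteness of the ambient category $\cC$ (Lie algebras, or restricted Lie algebras in positive characteristic) to reduce all four statements to a single computation inside one universal object. Write $F=\langle x\rangle$ for the free (restricted) Lie algebra on one generator, form the coproduct $L\amalg F$, and let $u\colon L\to L\amalg F$ be the canonical inclusion, an object of $L\downarrow\cC$. Given an extended derivation $(\partial_f)_f$ (resp.\ extended endomorphism $(\eta_f)_f$), the key observation is that for every $f\colon L\to L'$ and every $l'\in L'$ there is a unique comma-category morphism $g\colon u\to f$ with $g(x)=l'$, so that naturality forces $\partial_f(l')=g(\partial_u(x))$ (resp.\ $\eta_f(l')=g(\eta_u(x))$). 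Thus the entire datum is determined by the single element $v:=\partial_u(x)$ (resp.\ $\eta_u(x)$) of $L\amalg F$, and everything reduces to pinning down which $v$ can occur.

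\emph{Locating $v$.} I would first extract two structural constraints on $v$ from distinguished morphisms out of $u$, relative to the natural $\bN$-grading of $L\amalg F$ by $x$-degree. The augmentation $\epsilon\colon L\amalg F\to L$ (killing $x$, fixing $L$) is a morphism $u\to\id_L$, and naturality gives $\epsilon(v)=\partial_{\id_L}(\epsilon(x))=0$, so $v$ has vanishing degree-zero part. Next, comparing the ``sum'' morphism $x\mapsto x_1+x_2$ with the two inclusions $x\mapsto x_i$ into $L\amalg\langle x_1,x_2\rangle$ and invoking linearity of $\partial_u$ (resp.\ $\eta_u$) yields the additivity relation $\sigma(v)=\iota_1(v)+\iota_2(v)$. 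Together with $\epsilon(v)=0$ this confines $v$ to the degree-one part: every homogeneous component of degree $\geq 2$ is non-additive, and in the restricted case the $p$-power directions $x^{[p^i]}$ are excluded as well, since $(x_1+x_2)^{[p]}$ differs from $x_1^{[p]}+x_2^{[p]}$ by the nonzero Jacobson terms. (Over an infinite field one may instead use the scaling morphisms $x\mapsto\lambda x$, acting as $\lambda^d$ on the degree-$d$ part, to isolate degree one at once.) Identifying the degree-one part with the free module on $x$ over the (restricted) enveloping algebra via the adjoint action $\xi\mapsto\xi\cdot x$, I obtain $v=\xi\cdot x$ for a unique $\xi\in U(L)$ (resp.\ $u(L)$).

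\emph{Imposing the defining condition and concluding.} It remains to use that each $\partial_f$ is a derivation (resp.\ each $\eta_f$ a homomorphism), which I would test on $u_2\colon L\to L\amalg\langle y,z\rangle$. Computing the image of $[y,z]$ both through naturality (the morphism $x\mapsto[y,z]$) and through the Leibniz/homomorphism rule, and combining the coproduct identity $\xi\cdot[a,b]=\sum[\xi_{(1)}\cdot a,\xi_{(2)}\cdot b]$ for the adjoint action with the freeness of the bidegree-$(1,1)$ component (identified with $U(L)^{\otimes 2}$ by PBW), reduces the requirement to $\Delta\xi=\xi\otimes 1+1\otimes\xi$ in the derivation case and to $\Delta\xi=\xi\otimes\xi$ in the endomorphism case. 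That is, $\xi$ must be primitive, respectively grouplike. Since $U(L)$ and $u(L)$ are connected coalgebras their only grouplikes are $\{1\}$, forcing $\xi\in\{0,1\}$ and hence $\eta_f\in\{0,\id_{L'}\}$ uniformly; this gives (c) in every characteristic and (d). In the derivation case primitivity gives $\xi\in P(U(L))=L$ in characteristic zero and $\xi\in P(u(L))=L$ in the restricted case, so $v=[\xi,x]$ and $\partial_f$ is exactly \Cref{eq:extderiv}; that $a\mapsto(\partial_{a,f})_f$ is then a linear bijection follows because injectivity is already visible at $f=u$, where $\partial_{a,u}(x)=[a,x]=a\cdot x\neq 0$ for $a\neq 0$ by freeness of the degree-one part. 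This establishes (a) and (b).

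\emph{Main obstacle.} I expect the degree-reduction step to be the technical heart: identifying the graded pieces of the free (restricted) product $L\amalg F$ with tensor powers of the enveloping algebra, and verifying that the additive (or scaling-homogeneous) elements of the right degree are precisely $U(L)\cdot x$, is where PBW, connectedness of the enveloping coalgebra, and---in positive characteristic---the interaction of the $p$-operation with the grading must all be handled carefully. The final passage from primitivity and grouplikeness to $L$ versus $\{0,1\}$ is exactly where the hypotheses (characteristic zero, or restrictedness) enter, through the computation of the primitives of the enveloping (restricted) Hopf algebra.
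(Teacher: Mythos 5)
Your proposal is correct, and its endgame coincides with the paper's: everything is funnelled into the facts that the primitives of $U(L)$ (characteristic $0$) resp.\ of $U_p(L)$ are exactly $L$, and that the only solutions of $\Delta\xi=\xi\otimes\xi$ in these connected Hopf algebras are $0$ and $1$. The routes there differ in two places. First, the paper does not re-derive the reduction to a single controlling element of the enveloping algebra: it quotes Bergman's analysis, which presents an extended derivation as $l'\mapsto[f(a),l']$ for a constant-less $a\in Q(L)$ with $[a,x]\in\langle L,x\rangle$, and an extended endomorphism as $m\mapsto\varphi(a)m\varphi(b)$. You instead re-prove this reduction via the augmentation and the additivity relation $\sigma(v)=\iota_1(v)+\iota_2(v)$; this is more self-contained but is also where your real technical debt sits, namely showing that no homogeneous component of $x$-degree $\geq 2$ --- in particular of degree $p^j$, where the naive binomial-coefficient polarization argument degenerates --- can be additive. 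Second, and more substantively, the paper extracts primitivity of the controlling element in one variable: since $[a,x]\in\langle L,x\rangle$ is primitive in $Q(L)*k[x]$, a non-cancelling term $a_{i,1}\otimes a_{i,2}x$ in $\Delta([a,x])$ forces $a$ itself to be primitive. Your parametrization by the adjoint action $\xi\mapsto\xi\cdot x$ makes that shortcut unavailable (one checks that $\xi\cdot x$ is primitive for \emph{every} $\xi$, by cocommutativity), so you are right that you must instead invoke the Leibniz, respectively homomorphism, rule on $[y,z]$ in the two-variable coproduct together with the freeness of its bidegree-$(1,1)$ component; that step is sound (cocommutativity is again needed to match $\sum[\xi_{(1)}\cdot y,\xi_{(2)}\cdot z]$ with the Leibniz expansion) but costs an extra PBW-type identification that the paper's one-variable computation avoids. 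In short: same Hopf-algebraic mechanism and the same final lemma on primitives and grouplikes, with a self-contained but technically heavier front end replacing the citation of Bergman.
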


Part \Cref{item:done} is already settled in \cite[Theorem 12]{bg} and is included here only for completeness; the other three items answer the various questions of \cite[\S 8]{bg} indicated above.

\Cref{se:cpctg} focuses on another instance of this same phenomenon, whereby functoriality begets innerness, but this time working in the category $\cat{CGp}_0$ of compact connected topological groups (here always assumed Hausdorff). The partial analogue of \cite[Corollary 3]{bg} is \Cref{th:trivinner}, and reads

\begin{theoremN}
  For a compact connected group $G$, the individual components of a natural endomorphism of
  \begin{equation*}
    G\downarrow \cat{CGp}_0 \xmapsto{\quad U_{G,\cat{CGp}_0}\quad} \cat{CGp}_0
  \end{equation*}
  are either all trivial or all inner automorphisms.  \qedhere
\end{theoremN}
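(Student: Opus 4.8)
The plan is to leverage the fact that $(\id_G\colon G\to G)$ is the \emph{initial} object of $G\downarrow\cat{CGp}_0$: for any object $(f\colon G\to H)$ the map $f$ is itself the unique morphism $(\id_G)\to(f)$, so the single component $\psi:=\eta_{\id_G}\in\End(G)$ controls everything through naturality. Concretely, naturality of $\eta$ along this morphism yields $\eta_f\circ f=f\circ\psi$, so $\eta_f$ is already prescribed on the (compact, hence closed) image $f(G)\subseteq H$. The whole problem therefore splits into two tasks: first decide which endomorphisms $\psi$ can occur, and then show that the value on $f(G)$ propagates to a global description of $\eta_f$.

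First I would pin down $\psi$ using coproducts, whose existence in $\cat{CGp}_0$ is exactly where the Bohr compactification enters: the coproduct $G\sqcup G$ is the identity component of the Bohr compactification of the free product, with inclusions $\iota_1,\iota_2$. I would test $\eta$ against the object $(\iota_1\colon G\to G\sqcup G)$, which carries an unusually rich endomorphism monoid in the comma category. Besides the fold $\nabla$ and the projection killing the second leg (both of which are morphisms to $(\id_G)$ and force $\eta_{(\iota_1)}$ to reduce to $\psi$ on the first leg), for every $w\in G\sqcup G$ the map fixing $\iota_1$ and conjugating the second leg by $w$ is an automorphism \emph{of the object}. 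Naturality makes $\eta_{(\iota_1)}$ commute with all of these leg-conjugations. Writing $X:=\eta_{(\iota_1)}(\iota_2(g))$, the projection places $X$ in its kernel, the fold pins the total content of $X$ to $\psi(g)$, and comparing ``conjugate one leg'' with ``conjugate everything'' forces, through a normal-form computation in the free product, that either $X=\iota_1(g_0)\,\iota_2(g)\,\iota_1(g_0)^{-1}$ with $\psi$ the inner automorphism $c_{g_0}$ of conjugation by some $g_0$, or else $X=1$ with $\psi$ trivial. This is the dichotomy, now established at the level of $\psi$.

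Next I would propagate to an arbitrary $f\colon G\to H$ by the same device applied to $G\sqcup H$, with inclusions $j_G,j_H$. The codiagonal-type map $q:=[f,\id_H]\colon G\sqcup H\to H$ is an epimorphism satisfying $q\circ j_G=f$, hence a morphism $(j_G)\to(f)$, so naturality $q\circ\eta_{(j_G)}=\eta_f\circ q$ lets me read $\eta_f$ off from $\eta_{(j_G)}$. On $G\sqcup H$ the leg-conjugations, together with the object-endomorphisms $[j_G,\gamma]$ ranging over homomorphisms $\gamma\colon H\to G\sqcup H$, play the role of the fold, and — reusing the shape of $\psi$ found above — they should force $\eta_{(j_G)}(j_H(h))=j_G(g_0)\,j_H(h)\,j_G(g_0)^{-1}$ in the inner branch and $\equiv 1$ in the trivial branch. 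Applying $q$ then gives $\eta_f=c_{f(g_0)}$, an inner automorphism of $H$, in the first case and $\eta_f\equiv 1$ in the second, uniformly in $f$; this is the asserted ``all inner or all trivial''.

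The step I expect to be the main obstacle is the control of the component on the ``new'' generators, i.e.\ the second leg of the coproduct, equivalently the behaviour of $\eta_f$ off the image $f(G)$. The leg-conjugation relations only constrain the image of $\eta_{(j_G)}$ up to a centralizing ambiguity (most visibly when $H$ is abelian, where inner automorphisms collapse and extra care is needed), so the dichotomy must be extracted from a genuine normal-form analysis inside the Bohr-compactified free product, where the dense free product carries additional relations and compactness and continuity are needed to pass to limits. Making this word-level argument rigorous in $\cat{CGp}_0$, and meshing it with the structure theory of compact connected groups — the interplay of the semisimple directions, where inner automorphisms are plentiful, with the central torus directions, where characters detect the ``multiplier'' of $\psi$ and force it into $\{0,1\}$ — is where I expect the bulk of the work to lie.
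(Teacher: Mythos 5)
Your strategy hinges on a ``normal-form computation in the free product'' inside the coproduct $G\sqcup G$ (and later $G\sqcup H$), and that is precisely the step that is unavailable in this category: the coproduct of compact groups is obtained by Bohr-compactifying the topologized free product, so the algebraic free product --- the only part of $G\sqcup G$ whose elements admit word normal forms --- is merely a dense subgroup. The element $X=\eta_{(\iota_1)}(\iota_2(g))$ has no reason to lie in that dense subgroup, and the constraints you list (the fold, the projection killing one leg, commutation with the leg-conjugations $[\iota_1,\mathrm{Ad}_w\circ\iota_2]$) do not pin $X$ down to the form $\iota_1(g_0)\,\iota_2(g)\,\iota_1(g_0)^{-1}$ or $X=1$ without such a normal form. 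You flag this as ``where the bulk of the work lies,'' but it is not a technical refinement of a correct skeleton: it is the whole problem, and the paper's introduction explicitly identifies this failure (words in $x^{\pm1}$ and elements of $G$ are only dense in $\langle G,x\rangle$) as the reason Bergman-style free-product arguments cannot be transplanted to compact groups. A secondary point: even granting the dichotomy for $\psi=\eta_{\id}$, innerness of $\psi$ only determines $\eta_f$ on the closed subgroup $f(G)\le H$, and your proposed remedy routes through the same unavailable analysis in $G\sqcup H$.

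The paper avoids coproducts entirely and argues representation-theoretically, in two stages. First (Proposition on endomorphisms being automorphisms): each component is trivial or injective, shown by pushing forward along quotients onto compact connected \emph{simple} Lie groups $PSU(n)$, which detect every nontrivial element; trivial or surjective, using the inverse-limit-of-Lie-quotients structure and the fact that an injective endomorphism of a compact Lie group is onto; and a product trick with $G\times H$ to rule out a mixture of trivial and bijective components. Second (innerness): the cases $G=\bS^1$ and $G=U(n)$ are settled using $\Aut(U(n))\cong PSU(n)\rtimes\bZ/2$ and the determinant map, and then one observes that $\alpha_{\id}$ fixes the isomorphism class of every finite-dimensional unitary representation of $G$, whence it is inner by McMullen's theorem for connected compact groups. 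To salvage your approach you would need a substitute for the normal form --- say, a separating family of finite-dimensional representations of the coproduct on which to evaluate your leg-conjugation constraints --- and at that point you would essentially be rederiving the paper's argument.
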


While very similar in character to the results discussed above, the proofs are by necessity quite different. \cite{bg} and the citing literature (e.g. \cite{hps,prkr-pshvs,prkr-racks}) tend to adopt universal-algebra-flavored approaches: the idea is to study the effect of (say) a natural endomorphism of \Cref{eq:fgt} on the morphism
\begin{equation*}
  c\to \langle c,x\rangle
\end{equation*}
into the structure (set, group, etc.) that {\it freely adjoins} an element $x$. Such universal constructions do exist in the category $\cat{CGp}$ of compact groups; in category-theoretic language, $\cat{CGp}$ is, for instance, {\it cocomplete} \cite[Definition 12.2]{ahs}. To construct the {\it coproduct} \cite[\S 10.63]{ahs} (or {\it free product}) $G_1*G_2$ of two compact groups one must
\begin{itemize}
\item form the usual group coproduct $G*_{discrete}G_2$;
\item equip it with the coarsest group topology making the canonical embeddings
  \begin{equation*}
    \iota_i:G_i\to G_1*_{discrete}G_2
  \end{equation*}
  continuous (e.g. \cite[introductory remarks]{ft-free});
\item and then take the {\it Bohr compactification} (\cite[\S III.9]{bjm} or \cite[\S 2.10]{kan-comm}) thereof.
\end{itemize}

It is this last step that disturbs the usual procedure: freely appending an element $x$ to a compact group $G$ amounts to the above with
\begin{equation*}
  G_1=G
  \quad\text{and}\quad
  G_2=\text{Bohr compactification of }\bZ\cong \langle x\rangle.
\end{equation*}
Words in $x^{\pm 1}$ and elements of $G$ no longer constitute all of $\langle G,x\rangle$, but rather only a dense subgroup thereof; for that reason, arguments such as those in the proof of \cite[Theorem 1]{bg} are no longer available.

\subsection*{Acknowledgements}

This work is partially supported by NSF grant DMS-2001128.


\section{Lie algebras}\label{se:lalg}

We work with Lie algebras over fields, for which \cite{jc} is an excellent source. Having fixed a field $k$, $L$ will typically denote
\begin{itemize}
\item a Lie algebra unless specified otherwise, or
\item a {\it restricted Lie algebra} in characteristic $p$, in the sense of \cite[\S V.7 Definition 4]{jc} or \cite[Definition 2.3.2]{mon}.
\end{itemize}

We further write

\begin{itemize}
\item $U(L)$ for the universal enveloping algebra of a Lie $k$-algebra (\cite[\S V.1, Definition 1]{jc}), and
\item $U_p(L)$ for the {\it restricted enveloping algebra} of the restricted Lie algebra $L$ in characteristic $p$ ($\overline{U}_L$ in \cite[\S V.7 Theorem 12]{jc}).
\item $Q(L)$ to denote either $U(L)$ or $U_p(L)$, as appropriate (so as to have uniform notation to refer to both cases).
\end{itemize}

Note that both $U(L)$ and $U_p(L)$ are naturally Hopf algebras over $k$ \cite[Example 1.5.4, Definition 2.3.2]{mon} and hence come equipped with counits $\varepsilon$, comultiplications $\Delta$, etc. We will assume basic background on Hopf algebras as covered, for instance, in \cite{mon}. The Hopf algebra structure is uniquely determined by the requirement that the elements $x\in L$ be {\it primitive} \cite[Definition 1.3.4]{mon}, i.e.
\begin{equation*}
  \Delta(x) = x\otimes 1 + 1\otimes x. 
\end{equation*}

\subsection{Derivations}\label{subse:der}

To streamline the statement of \Cref{th:q11}, we introduce some terminology.

\begin{definition}
  Let $k$ be a field, and $L$ either a Lie algebra (in characteristic zero) or a restricted Lie algebra in characteristic $p$, and $\langle L,x\rangle$ the (restricted) Lie algebra freely generated by $L$ and a formal variable $x$.

  An element $a\in Q(L)$
  \begin{itemize}
  \item is {\it constant-less} if it is annihilated by the counit $\varepsilon:Q(L)\to k$ of the Hopf algebra $Q(L)$. 
  \item {\it induces a universal derivative} (or is {\it universally derivative} or {\it a universal derivative}) if, for a formal variable $x$, the commutator $[a,x]\in Q(\langle L,x\rangle)$ belongs to $\langle L,x\rangle$.
  \end{itemize}
\end{definition}

Note that $L\subset Q(L)$ consists of constant-less universal derivatives. The following result gives the converse, answering \cite[Question 11]{bg} negatively.

\begin{theorem}\label{th:q11}
  Let $k$ be a field and $L$ either a Lie algebra over $k$ (in characteristic zero) or a restricted Lie algebra (in characteristic $p$).
  \begin{enumerate}[(a)]
  \item\label{item:1} If $k$ has characteristic zero then the only constant-less universally derivative elements of $U(L)$ are those of $L$.
  \item\label{item:2} Similarly, if $k$ has characteristic $p$ then the only constant-less universally derivative elements of $U_p(L)$ are those of $L$.
  \end{enumerate}
\end{theorem}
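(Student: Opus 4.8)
The plan is to pass to enveloping algebras and reinterpret both hypotheses Hopf-theoretically. Since $Q$ (be it $U$ or $U_p$) is a left adjoint, it preserves coproducts; as the free (restricted) Lie algebra $\langle x\rangle$ on a single generator satisfies $Q(\langle x\rangle)\cong k[x]$ (in characteristic $0$ because $\langle x\rangle=kx$; in characteristic $p$ because the restricted enveloping algebra imposes $x^{[p^i]}=x^{p^i}$ and hence collapses to a polynomial ring), the coproduct presentation $\langle L,x\rangle=L\amalg\langle x\rangle$ yields
\[
Q(\langle L,x\rangle)\;\cong\;Q(L)*k[x],
\]
the free product of associative algebras, which I would regard as a graded Hopf algebra with $\deg x=1$ and $Q(L)$ in degree $0$. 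Embedding a (restricted) Lie algebra into its enveloping algebra as its space of primitives, the requirement that $a$ induce a universal derivative, i.e. $[a,x]\in\langle L,x\rangle$, becomes exactly the assertion that $[a,x]$ is primitive in $Q(L)*k[x]$, since $\langle L,x\rangle=P\bigl(Q(\langle L,x\rangle)\bigr)$.

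First I would record the reduced coproduct. Writing $\ol\Delta a:=\Delta a-a\otimes 1-1\otimes a$ and using that $a$ is constant-less ($\varepsilon(a)=0$), so that $\ol\Delta a=\sum a'\otimes a''$ lies in $Q(L)^+\otimes Q(L)^+$ with $Q(L)^+=\ker\varepsilon$, a direct expansion of $\Delta([a,x])=[\Delta a,\Delta x]$ with $\Delta x=x\otimes 1+1\otimes x$ gives
\[
\Delta([a,x]) = [a,x]\otimes 1 + 1\otimes[a,x] + \sum [a',x]\otimes a'' + \sum a'\otimes[a'',x];
\]
the mixed terms $[a\otimes 1,\,1\otimes x]$ and $[1\otimes a,\,x\otimes 1]$ vanish because the two tensor legs commute. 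Hence $[a,x]$ is primitive if and only if $\sum [a',x]\otimes a'' + \sum a'\otimes[a'',x]=0$.

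Next I would separate this identity along the $x$-bigrading on $Q(\langle L,x\rangle)\otimes Q(\langle L,x\rangle)$: the first sum sits in bidegree $(1,0)$ and the second in $(0,1)$, so each vanishes on its own, and in particular $\sum [a',x]\otimes a''=0$. The closing step is injectivity: the map $\phi\colon Q(L)^+\to Q(L)*k[x]$, $b\mapsto [b,x]=bx-xb$, is injective. This is where the free-product normal form is essential — relative to a PBW basis of $Q(L)$, the words $bx$ and $xb$ are distinct reduced (alternating) words, so $\phi$ is injective, and therefore $\phi\otimes\id$ is injective since $k$ is a field. Consequently $\sum [a',x]\otimes a''=(\phi\otimes\id)(\ol\Delta a)=0$ forces $\ol\Delta a=0$, i.e. $a$ is primitive, i.e. $a\in P(Q(L))=L$. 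The argument is characteristic-free and settles \Cref{item:1,item:2} at once, the sole case distinction being the identification $Q(\langle x\rangle)\cong k[x]$.

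The step I expect to demand the most care is the structural input $Q(\langle L,x\rangle)\cong Q(L)*k[x]$ together with its $x$-grading and the attendant reduced-word basis, since everything downstream — the vanishing of the mixed tensor terms, the bidegree separation, and the injectivity of $\phi$ — rests on having an honest basis of alternating words. In the restricted setting one must additionally verify that $Q(\langle x\rangle)$ genuinely collapses to $k[x]$ and that the primitives of $U_p$ recover the restricted Lie algebra; granting these, the rest of the proof is formal.
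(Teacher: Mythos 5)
Your argument is correct and follows essentially the same route as the paper's: expand $\Delta([a,x])$, observe that a nonzero reduced coproduct $\ol\Delta a$ would produce surviving cross terms contradicting the primitivity of $[a,x]\in\langle L,x\rangle$, and conclude via $P(Q(L))=L$. The only difference is one of rigor: where the paper asserts that the term $a_{i,1}\otimes a_{i,2}x$ ``appears only once and hence will not cancel,'' you justify the non-cancellation properly via the free-product decomposition $Q(\langle L,x\rangle)\cong Q(L)*k[x]$, the $x$-bigrading, and the injectivity of $b\mapsto[b,x]$ on $Q(L)^+$ -- a welcome tightening of the same idea.
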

\begin{proof}
  The two statements (and proofs) are very similar, so we treat only the first in detail. The phenomenon driving both arguments is the fact that $L$ can be recovered as precisely the space of primitive elements in the Hopf algebra $Q(L)$.

  \Cref{item:1} As hinted above, note first that in characteristic zero, the primitive elements $P(U(G))$ of $U(G)$ (for an arbitrary Lie algebra $G$) are precisely those of $G\subset U(G)$ \cite[Proposition 5.5.3, part 2)]{mon}. Take $G=\langle L,x\rangle$. The hypothesis
  \begin{equation*}
    [a,x] \in \langle L,x\rangle
  \end{equation*}
  (for some constant-less universal derivative $a\in U(L)$) implies that the commutator $[a,x]$ is primitive. This, in turn, implies that $a$ is primitive. To see this, assume otherwise and write
  \begin{equation*}
    \Delta(a) = a\otimes 1 + 1\otimes a + \sum_i a_{i,1}\otimes a_{i,2} 
  \end{equation*}
  where $a_{i,j}$ are
  \begin{itemize}
  \item constant-less elements of $U(L)$,
  \item with at least one non-zero term $a_{i,1}\otimes a_{i,2}$,
  \item and linearly independent $a_{i,2}$ (since we can always group the tensors so as to arrange for this).
  \end{itemize}
  Expanding $\Delta([a,x])$, the resulting term $a_{i,1}\otimes a_{i,2}x$ appears only once, and hence will not cancel. This contradicts the primitivity of $[a,x]$, concluding that indeed $a$ must be primitive. But then, by the already-cited \cite[Proposition 5.5.3, part 2)]{mon}, $a\in L$.

  \Cref{item:2} The argument goes through almost verbatim, the only difference being that this time around we use the fact that in characteristic $p$ the primitive elements $P(U_p(L))$ of $U_p(L)$ are those of $L$. This is not quite what \cite[Proposition 5.5.3, part 3)]{mon} says, but that proof can be adapted. The claim (that $P(U_p(L))=L$) also follows from \cite[Proposition 13.2.3]{swe}.
\end{proof}

\subsection{Endomorphisms}\label{subse:endo}

There is an endomorphism (as opposed to derivation) version of \Cref{th:q11}, which in turn answers \cite[Question 13]{bg}. Before stating it, some more terminology.

\begin{definition}\label{def:univendo}
  Let $L$ be an object of a category $\cC$. A {\it universal endomorphism} of $L$ in $\cC$ is an endomorphism of the forgetful functor
  \begin{equation*}
    L\downarrow \cC\to \cC. 
  \end{equation*}
\end{definition}

These are the {\it extended inner endomorphisms} of \cite[Definition 4]{bg}. The announced universal-endomorphism version of \Cref{th:q11} now reads

\begin{theorem}\label{th:q13}
  Let $k$ be a field and $L$ either a Lie algebra over $k$ (in characteristic zero) or a restricted Lie algebra (in characteristic $p$).
  \begin{enumerate}[(a)]
  \item\label{item:3} The only universal endomorphisms of $L$ in the category $\cat{Lie}_k$ of Lie $k$-algebras are $0$ and $\id$.
  \item\label{item:4} Furthermore, if $k$ has characteristic $p$ then the only universal endomorphisms of $L$ in the category $\cat{Lie}_{k,p}$ of restricted Lie $k$-algebras are $0$ and $\id$.
  \end{enumerate}  
\end{theorem}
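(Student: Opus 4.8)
The plan is to show that a universal endomorphism $\Xi=(\phi_f)_f$ of $L$ is completely encoded by the single element $w:=\phi_\iota(x)\in\langle L,x\rangle$, where $\iota\colon L\to\langle L,x\rangle$ is the coprojection, and then to read off from the coalgebra structure of $Q(L)$ that $w\in\{0,x\}$. For the first point, fix $f\colon L\to L'$ and $l'\in L'$ and let $g_{f,l'}\colon\langle L,x\rangle\to L'$ be the morphism that is $f$ on $L$ and sends $x\mapsto l'$; since $g_{f,l'}\circ\iota=f$, this is an arrow $(\iota)\to(f)$ in $L\downarrow\cC$, so naturality of $\Xi$ gives $g_{f,l'}\circ\phi_\iota=\phi_f\circ g_{f,l'}$. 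Evaluating at $x$ yields $\phi_f(l')=g_{f,l'}(w)$, so every component is determined by $w$; in particular $\phi_{\id_L}=\phi_\iota|_L$. It therefore suffices to determine which $w$ can occur.

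Next I would pin down the shape of $w$. Because each $\phi_f$ is a $k$-linear morphism, $\phi_f(0)=0$ forces $w$ to have vanishing $x$-degree-$0$ component, and $k$-linearity of $l'\mapsto g_{f,l'}(w)$ forces $w$ to be homogeneous of degree $1$ in $x$; concretely one tests this with the scaling endomorphisms $x\mapsto\lambda x$ of $\langle L,x\rangle$, which fix $L$ and hence commute with $\phi_\iota$ by naturality. A homogeneous degree-$1$ element is, after repeated use of the Jacobi identity, a sum of left-normed brackets $[l_1,[\dots,[l_n,x]\dots]]$, i.e. $w=a_0\triangleright x$ for a unique $a_0\in U(L)$, where $\triangleright$ denotes the adjoint action of $U(L)$ extending $l\triangleright(-)=[l,-]$; equivalently $w$ corresponds to $a_0$ under the identification of the $x$-degree-$1$ part of $\langle L,x\rangle$ with the left regular representation of $U(L)$. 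With this normal form the evaluation formula becomes $\phi_f(l')=f_\ast(a_0)\triangleright l'$, where $f_\ast\colon U(L)\to U(L')$ is the induced algebra map.

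The heart of the argument is then the requirement that each $\phi_f$ preserve brackets, tested on the ``generic'' situation $L'=\langle L,y_1,y_2\rangle$ with $f$ the coprojection: applying $\phi_f$ to $[y_1,y_2]$ and using the evaluation formula gives the identity $a_0\triangleright[y_1,y_2]=[\,a_0\triangleright y_1,\,a_0\triangleright y_2\,]$. Since the adjoint action makes the Lie bracket a morphism of $U(L)$-modules via the comultiplication, so that $a_0\triangleright[y_1,y_2]=\sum[\,a_0^{(1)}\triangleright y_1,\,a_0^{(2)}\triangleright y_2\,]$ in Sweedler notation, and since $U(L)\otimes U(L)$ acts faithfully on the relevant free target (the bidegree-$(1,1)$ part of $\langle L,y_1,y_2\rangle$ is its regular representation), this identity is equivalent to $\Delta(a_0)=a_0\otimes a_0$. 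Thus $a_0$ is either $0$ or a grouplike element of $Q(L)$; as $Q(L)$ is a connected (conilpotent) Hopf algebra—exactly the input that gave $P(Q(L))=L$ in \Cref{th:q11}—its only grouplike is $1$. Hence $a_0\in\{0,1\}$, i.e. $w\in\{0,x\}$, and correspondingly $\phi_f=0$ for all $f$ or $\phi_f=\id_{L'}$ for all $f$. The restricted case is identical, using $U_p(L)$ in place of $U(L)$ and its own connectedness.

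The step I expect to be the main obstacle is the homogeneity of $w$. The scaling endomorphisms detect the $x$-grading cleanly only over an infinite field; over a finite field $k$ the relation $\lambda^{d}v_d=\lambda v_d$ constrains the degree-$d$ component $v_d$ only for $d\equiv1\pmod{|k|-1}$, and in positive characteristic the naive polarization that would otherwise kill the higher components can degenerate. I would handle this by a bidegree bookkeeping argument run directly from the bracket identity above, comparing the purely balanced bidegrees produced by $x\mapsto[y_1,y_2]$ on the left against the off-diagonal bidegrees appearing on the right, or else by extending scalars to an infinite extension together with a descent argument for the grading, which is defined over $k$. Conceptually it is worth flagging the clean parallel with \Cref{th:q11}: there the condition that $[a,x]$ remain inside the Lie algebra forced $a$ to be primitive ($\Delta a=a\otimes1+1\otimes a$), recovering $L$; here the condition that $a_0\triangleright(-)$ be an endomorphism forces $a_0$ to be grouplike ($\Delta a_0=a_0\otimes a_0$), recovering only $\{0,\id\}$.
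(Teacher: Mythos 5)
Your route is genuinely different from the paper's and, in outline, correct. The paper imports from Bergman the normal form $m\mapsto \varphi(a)m\varphi(b)$ for a universal endomorphism, reduces to the generic instance $axb\in\langle L,x\rangle$, and kills $\overline a,\overline b$ by exhibiting an uncancellable term in $\Delta(axb)$ contradicting primitivity. You instead derive your own normal form from scratch ($w=\phi_\iota(x)$ must be $x$-degree~$1$, hence $w=a_0\triangleright x$ with $a_0\in Q(L)$ acting adjointly), and then exploit multiplicativity rather than primitivity: bracket preservation on the generic pair $y_1,y_2$ forces $\Delta(a_0)=a_0\otimes a_0$, and connectedness of $Q(L)$ leaves only $a_0\in\{0,1\}$. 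This is self-contained (no appeal to Bergman's Theorem~12 discussion), avoids the paper's ``absorb the scalar and repeat'' step, and makes the pleasant dichotomy explicit: primitives classify extended derivations, grouplikes classify extended endomorphisms. The faithfulness claim you use --- that $(u,v)\mapsto[u\triangleright y_1,v\triangleright y_2]$ is injective on $Q(L)^{\otimes 2}$ --- is true but should be checked (project onto the words with $y_1$ preceding $y_2$; the resulting map $u\otimes v\mapsto\sum u_{(1)}\otimes S(u_{(2)})v$ is bijective with inverse $u\otimes v\mapsto\sum u_{(1)}\otimes u_{(2)}v$), and you need cocommutativity of $Q(L)$ to identify $a_0\triangleright[y_1,y_2]$ with $\sum[a_{0(1)}\triangleright y_1,a_{0(2)}\triangleright y_2]$.

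The one step you leave open --- homogeneity of $w$ in $x$-degree $1$ --- is a real gap as written, but your first proposed repair works and your worry about finite fields and characteristic $p$ is unfounded, provided you run the argument in the free product rather than via scaling or symmetrized polarization. Additivity alone suffices: naturality against $x\mapsto y_1+y_2$ and $x\mapsto y_i$ gives $w(y_1+y_2)=w(y_1)+w(y_2)$ in $Q(L)*k[y_1]*k[y_2]$. Writing $w_d$ for the $x$-degree-$d$ component, identified with an element of $Q(L)^{\otimes(d+1)}$ via $u_0\otimes\cdots\otimes u_d\mapsto u_0xu_1\cdots xu_d$, the bidegree-$(d-1,1)$ part of $w_d(y_1+y_2)$ is the sum over $j$ of the copies of $w_d$ sitting in the $d$ distinct word-patterns ``$y_2$ in position $j$''; these patterns are linearly independent direct summands of the free product, so no binomial coefficients ever appear and the vanishing of this component forces $w_d=0$ for every $d\ge 2$, in any characteristic (the restricted case is identical since $Q(\langle L,x\rangle)=Q(L)*k[x]$ there too, and the $p$-operation cannot produce new degree-$1$ elements). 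By contrast, your second proposed repair (extension of scalars plus descent) is the one I would avoid: a natural endomorphism of $U_{L,\cat{Lie}_k}$ does not obviously induce one of $U_{L\otimes_kK,\cat{Lie}_K}$, since naturality only gives you $k$-linearity of the components on $K$-algebras. With the additivity computation written out, your proof is complete.
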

\begin{proof}
  As explained in \cite[discussion preceding Theorem 12]{bg}, a universal endomorphism as in the statement is determined by elements $a,b\in Q(L)$, acquiring the expression
  \begin{equation}\label{eq:amb}
    M\ni m\mapsto \varphi(a)m\varphi(b),
  \end{equation}
  where
  \begin{itemize}
  \item $\varphi:L\to M$ is a (restricted) Lie algebra morphism as well as the corresponding morphism $Q(L)\to Q(M)$ it induces, and
  \item it is understood that for all such $\varphi$, the right-hand side of \Cref{eq:amb} belongs to $M$.
  \end{itemize}
  One can package all of this into its universal (or generic) instance: take $\varphi:L\to M$ to be the inclusion
  \begin{equation*}
    L\subseteq \langle L,x\rangle
  \end{equation*}
  for a formal variable $x$, and require that $axb\in \langle L,x\rangle$. Denote by $\varepsilon$ the counit of the Hopf algebra $Q(L)$, and decompose
  \begin{equation*}
    a = \varepsilon(a) + \overline{a},\quad b = \varepsilon(b) + \overline{b}
  \end{equation*}
  for constant-less $\overline{a}$ and $\overline{b}$; the goal is to show that these two latter elements must vanish.
  
  The argument is now similar to that in the proof of \Cref{th:q11}
  \begin{equation*}
    \Delta(\overline{a}) = \overline{a}\otimes 1 + 1\otimes\overline{a} + \cdots
  \end{equation*}
  and similarly for $\overline{b}$, where the missing summands indicated by `$\cdots$' are simple tensors with constant-less tensorands.

  Because $Q(\langle L,x\rangle)$ is the coproduct (over $k$) of $Q(L)$ and $k[x]$, if $\overline{a}$ and $\overline{b}$ are both non-vanishing then the term $\overline{a}\otimes x\overline{b}$ of $\Delta(axb)\in Q(\langle L,x\rangle)^{\otimes 2}$ will not cancel out, contradicting the fact that
  \begin{equation*}
    axb\in \langle L,x\rangle\subset Q(\langle L,x\rangle)
  \end{equation*}
  is primitive, i.e.
  \begin{equation*}
    \Delta(axb) = axb\otimes 1 + 1\otimes axb. 
  \end{equation*}
  It follows that at least one of $a$ and $b$ is scalar. Suppose it is $b$, so that we can absorb the constant into $a$ and work with $ax$ in place of the original $axb$. Now repeat the argument: if $\overline{a}\ne 0$ then the term $\overline{a}\otimes x$ will be present in $\Delta(ax)$, again contradicting the primitivity of $ax$.

  In conclusion $a$ and $b$ are both scalar, as claimed.
\end{proof}

\begin{remark}
  Alternatively, once we find that one of $a$ and $b$ is scalar we can conclude using the fact that, according to \cite[proof of Theorem 12]{bg}, $ba=1$; this was not used above.
\end{remark}

Part \Cref{item:3} of \Cref{th:q13} recovers \cite[Theorem 12]{bg}, while part \Cref{item:4} answers \cite[Question 13]{bg} negatively.

\begin{remark}\label{re:simpcon}
  {\it Finite-dimensional} (rather than arbitrary) Lie algebras are much more interesting, as extended endomorphisms or automorphisms go. 

  By \cite[\S III.6.1, Theorem 1]{bourb-lie-13} the category $\cat{LAlg}_{f,\Bbbk}$ of finite-dimensional Lie algebras over the real or complex field $\Bbbk$ is equivalent to that of {\it simply-connected} Lie groups over $\Bbbk$. Consequently, for any finite-dimensional Lie algebra $L\in \cat{Lalg}_{f,\Bbbk}$, the corresponding simply-connected Lie group $G_L$ with Lie algebra $L$ gives a wealth of ``inner'' automorphisms of the forgetful functor
  \begin{equation*}
    L\downarrow \cat{Lalg}_{f,\Bbbk}\xrightarrow{\quad U_{L}}\cat{Lalg}_{f,\Bbbk}:
  \end{equation*}
  an element $g\in G_L$ operates on the codomain $L'$ of a morphism
  \begin{equation}\label{eq:fll}
    L\xrightarrow{\quad f\quad}L'\text{ in }\cat{Lalg}_{f,\Bbbk}
  \end{equation}
  via the adjoint action of $\widetilde{f}(g)$, where
  \begin{equation*}
    G_L\xrightarrow{\quad \widetilde{f}\quad}G_{L'}
  \end{equation*}
  is the lift of \Cref{eq:fll} to simply-connected Lie groups (once more, \cite[\S III.6.1, Theorem 1]{bourb-lie-13}).
\end{remark}

\section{Compact groups}\label{se:cpctg}

Some notation:
\begin{itemize}
\item $\cat{CGp}$ the category of compact (Hausdorff) topological groups;
\item $\cat{CGp}_0$ that of {\it connected} compact groups;
\item And in general, for an object $c\in \cC$ and a full subcategory $\cC'\subseteq \cC$, we write
  \begin{equation}\label{eq:ucc}
    c\downarrow \cC'\xrightarrow{\quad U_{c,\cC'}\quad} \cC'
  \end{equation}
  for the respective forgetful functor (as in \Cref{eq:fgt}, with $\cC'$ in place of $\cC$).
\end{itemize}

The present section is concerned with the following (almost) ``automatic innerness'' for extended automorphisms of compact connected groups.

\begin{theorem}\label{th:trivinner}
  Let $G\in \cat{CGp}_0$ be a compact connected group and $\alpha\in \End(U_{G,\cat{CGp}_0})$ a natural endomorphism of the forgetful functor defined as in \Cref{eq:ucc}, assigning an endomorphism $\alpha_f\in \End(H)$ to
  \begin{equation}\label{eq:genericf}
    f:G\to H,\quad H\text{ compact connected}.
  \end{equation}
  One of the two following possibilities obtains:
  \begin{itemize}
  \item $\alpha$ is trivial, in the sense that $\alpha_{f}$ is the trivial endomorphism of $H$ for every \Cref{eq:genericf};
  \item or every $\alpha_f$ is an inner automorphism of $H$. 
  \end{itemize}
\end{theorem}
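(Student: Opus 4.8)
The plan is to let $\beta:=\alpha_{\id_G}\in\End(G)$ be the endomorphism attached to the initial object $\id_G$ of $G\downarrow\cat{CGp}_0$, and to read off as much of $\alpha$ as possible from $\beta$. There is one tautological family of comparison morphisms: for each $f:G\to H$ the map $f$ itself is the unique morphism $(\id_G)\to(f)$ in $G\downarrow\cat{CGp}_0$, so naturality gives the fundamental relation $\alpha_f\circ f=f\circ\beta$. In particular $\alpha_f$ preserves the compact connected subgroup $f(G)\subseteq H$ and is prescribed there. First I would isolate the two global alternatives by restricting $\alpha$ to the trivial morphisms $t_H:G\to H$ (constant at $e$). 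Since every homomorphism $g:H\to H'$ satisfies $g\circ t_H=t_{H'}$, the full subcategory of trivial objects is a copy of $\cat{CGp}_0$ on which $U_{G,\cat{CGp}_0}$ restricts to the identity functor; hence $(\alpha_{t_H})_H$ is a natural endomorphism of $\id_{\cat{CGp}_0}$. Probing with circles $\bT\to H$ and using that $h\mapsto h^{m}$ is a homomorphism of a nonabelian compact group only for $m\in\{0,1\}$, I would show such natural endomorphisms are exactly $\id$ and the trivial one, and that the choice is uniform in $H$. This produces the dichotomy at the level of trivial objects.

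Next I would tie $\beta$ to this dichotomy by a transport argument, the device that rules out spurious possibilities such as $\beta$ acting by inversion. Given a character $\chi:G\to\bT$ one has $\alpha_\chi=(\cdot)^{n_\chi}$ and $\chi\circ\beta=n_\chi\chi$; feeding $\chi$ into a product $G\to\bT^2$ and projecting onto a trivial factor identifies $n_\chi$ with the single integer $m\in\{0,1\}$ of the previous step. Thus $\beta$ acts on the abelianization $G/\overline{[G,G]}$ as the $m$-th power map: trivially when $m=0$, and as the identity when $m=1$ (as inner automorphisms must). I would then reduce the target to compact connected Lie groups: writing $H=\varprojlim H/N$ over closed normal $N$ with $H/N$ Lie and using naturality with respect to the projections $H\to H/N$, each $\alpha_f$ is the inverse limit of the $\alpha_{(H\to H/N)\circ f}$, and both ``trivial'' and ``inner'' are detected compatibly on the Lie quotients.

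It then remains to upgrade the information on $f(G)$ to all of $H$ in each case. When $m=0$ I would show $\beta$ is the trivial endomorphism and conclude, via the relation $\alpha_f\circ f=f\circ\beta$ together with the rigidity $\End(S)=\{\text{trivial}\}\sqcup\Aut(S)$ of a compact simple Lie group $S$, that no $\alpha_f$ can be a nontrivial automorphism. When $m=1$ I would show $\beta$ is an inner automorphism $c_a$ and that every $\alpha_f$ equals $c_{f(a)}$: the restriction to $f(G)$ already agrees with $c_{f(a)}$, and full naturality ---now with respect to all homomorphisms out of $H$, in particular the Peter--Weyl family $H\hookrightarrow\prod_i U(n_i)$ and the quotients onto simple factors--- pins the two automorphisms down to be equal.

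The main obstacle is exactly this last propagation from $f(G)$ to $H$ in the nontrivial case: unlike in the group setting of \cite{bg}, one cannot freely adjoin a generator whose words exhaust a free product, because the compact free product drags in a Bohr compactification and the naive generators are only dense. I therefore expect the heart of the argument to be a rigidity statement to the effect that an endomorphism of a compact connected Lie group $H$ commuting with every endomorphism that fixes a given closed connected subgroup $K$ pointwise is determined, up to its behaviour on $K$, as an inner (respectively trivial) map---proved by reduction to unitary and simple targets and by excluding outer automorphisms through the same transport-to-a-trivial-object mechanism used to kill inversion. Assembling these local determinations compatibly across the inverse system $H=\varprojlim H/N$, so that the resulting inner elements can be chosen coherently, is the delicate final point.
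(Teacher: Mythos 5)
Your plan has the right overall shape---reduce to Lie, unitary and simple targets via naturality squares, and kill inversion-type behaviour by probing with abelian or trivial objects---but it leaves the actual crux unresolved, and the mechanism you propose for it would fail. The relation $\alpha_f\circ f=f\circ\beta$ only controls $\alpha_f$ on $f(G)$, and your abelian probes (characters, circles, projections onto trivial factors of products) only see the abelianization and the centre. They therefore cannot exclude the possibility that $\beta=\alpha_{\id_G}$ is an \emph{outer} automorphism acting trivially on both: for $G=PSU(n)$ with $n\ge 3$, complex conjugation is outer while the abelianization and the centre are trivial, so the ``transport-to-a-trivial-object mechanism used to kill inversion'' detects nothing. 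The rigidity statement you defer to (an automorphism of $H$ commuting with every endomorphism fixing a closed subgroup pointwise is determined as inner) is essentially the content of the theorem and is neither formulated precisely nor proved; note moreover that what naturality actually supplies is that $\alpha_f$ commutes with those endomorphisms $e$ of $H$ satisfying $e\circ f=f$, and for many $f$ there are very few such $e$, so the hypothesis of your putative lemma is weaker than it may appear.

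The paper closes this gap with a representation-theoretic input absent from your sketch: it first shows, via the central embedding $\bS^1\hookrightarrow U(n)$ and the determinant, that every component $\alpha_{f'}$ with $f':G\to U(n)$ is inner on $U(n)$, so each naturality square exhibits an isomorphism of representations $f'\cong f'\circ\alpha_{\id}$; then McMullen's theorem \cite[Corollary 2]{mcm-dual} --- an automorphism of a compact \emph{connected} group fixing the isomorphism class of every irreducible representation is inner --- finishes \Cref{pr:isinn}. Some substitute for that step is indispensable, and \Cref{re:needconn} explains why connectedness cannot be dispensed with there. Separately, you are attempting to prove more than the statement asserts: the theorem claims only that each $\alpha_f$ is \emph{some} inner automorphism, not that the implementing elements can be chosen coherently as $f(a)$ for a single $a\in G$ (the paper explicitly presents this as a \emph{partial} analogue of Bergman's Corollary 3); insisting on coherence across the inverse system $H=\varprojlim H/N$ adds a difficulty you flag but do not resolve. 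Finally, the trivial-versus-bijective dichotomy for the individual $\alpha_f$, needed before one can even speak of automorphisms, requires its own argument --- the paper obtains it in \Cref{pr:endoiso} by mapping to simple Lie quotients (\Cref{le:enoughsimple}) together with a product trick --- whereas your sketch covers only simple targets.
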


The proof requires some preparation.

\begin{remark}\label{re:enoughid}
  Consider a full subcategory $\cC'\subseteq \cC$ (as in \Cref{eq:ucc}). The application
  \begin{equation*}
    \cC\ni c\xmapsto{\quad} \End(U_{c,\cC'})
  \end{equation*}
  is functorial (albeit taking values, in principle, in the category of set-theoretically {\it large} monoids).
  
  In the sequel we use this repeatedly, and mostly tacitly. This functoriality will usually make an appearance (with, say, $\cC=\cC'=\cat{CGp}$ or $\cC=\cC'=\cat{CGp}_0$) in the following form: given an endomorphism
  \begin{equation*}
    \alpha\in \End(U_{c,\cC'})
  \end{equation*}
  and a morphism $f:c\to d$ with codomain $d\in \cC'$, the component $\alpha_f$ is itself the identity component
  \begin{equation*}
    \beta_{\id}=\alpha_f\text{ of some }\beta\in \End(U_{d,\cC'})
  \end{equation*}
  induced by $\alpha$.
\end{remark}

\begin{proposition}\label{pr:endoiso}
  \begin{enumerate}[(a)]
  \item\label{item:cgp0} For a compact connected group $G\in \cat{CGp}_0$, a non-trivial natural endomorphism of the forgetful functor
    \begin{equation*}
      G\downarrow\cat{CGp}_0\xrightarrow{\quad U_{G,\cat{CGp}_0}\quad} \cat{CGp}_0
    \end{equation*}
    is automatically a natural automorphism.
  \item\label{item:cgp} The same goes for the forgetful functor $U_{G,\cat{CGp}}$. 
  \end{enumerate}  
\end{proposition}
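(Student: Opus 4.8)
Write $\varphi:=\alpha_{\id_G}\in\End(G)$ for the component at the initial object $G\xrightarrow{\id}G$ of the comma category; this is the piece I would isolate first. For any object $f:G\to H$ the map $f$ is itself a morphism $(\id_G)\to(f)$, so naturality yields the basic identity $\alpha_f\circ f=f\circ\varphi$, which already pins $\alpha_f$ down on the closed subgroup $\overline{f(G)}$. By the functoriality recorded in \Cref{re:enoughid}, each $\alpha_f$ is moreover the identity-component $\beta_{\id_H}$ of the natural endomorphism $\beta$ that $\alpha$ induces on $H$. Hence the whole statement reduces to a single assertion about identity-components, which I will call the \emph{dichotomy}:
\begin{equation*}
\text{for every compact connected }K\text{ and every }\gamma\in\End(U_{K,\cC'}),\ \gamma_{\id_K}\text{ is trivial or an automorphism.}
\end{equation*}

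Granting the dichotomy, every $\alpha_f=\beta_{\id_H}$ is trivial or an automorphism; to see that one verdict holds uniformly I would couple any two components $\alpha_f,\alpha_{f'}$ (with $f:G\to H$, $f':G\to H'$) through the single object $d=(f,f'):G\to H\times H'$ together with its two projections. Naturality forces $\alpha_d=\alpha_f\times\alpha_{f'}$, and applying the dichotomy to $K=H\times H'$ (via \Cref{re:enoughid} again) shows that $\alpha_d$ — hence both factors simultaneously — is trivial or an automorphism. A non-trivial $\alpha$ therefore has no trivial component at all, so all components are automorphisms, which is the assertion.

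It remains to prove the dichotomy, where the real content lies. Writing $K=\varprojlim_i K_i$ as an inverse limit of its compact connected Lie quotients $q_i:K\to K_i$, the endomorphisms $\gamma_{q_i}\in\End(K_i)$ form a compatible family with limit $\gamma_{\id_K}$, and since invertibility can be tested on a cofinal system of Lie quotients this reduces the dichotomy to a compact connected Lie group $K_i$. There I would invoke the structure theory: $K_i$ is assembled from a central torus and finitely many simple factors. Probing with the comma-morphisms onto the centerless simple quotients of $K_i$ — which admit only the trivial endomorphism and (inner) automorphisms — controls the semisimple directions, while probing with external maps sending a central circle of $K_i$ onto a maximal torus of a simple compact group (e.g.\ $\mathrm{SU}(2)$, where the Weyl action gives $t\mapsto t^{\pm1}$) excludes the higher power maps $t\mapsto t^{m}$, $|m|\ge 2$, which would otherwise furnish a non-invertible, non-trivial endomorphism of the central torus. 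The final task is to show these local verdicts are forced to agree, so that $\gamma_{q_i}$ is \emph{uniformly} trivial or invertible.

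The main obstacle I anticipate is exactly this last rigidity together with the control of the central torus: unlike the simple factors, a central circle is invisible to the internal structure of $K_i$ and can be pinned down only by mapping $K_i$ out into genuinely non-abelian targets, so one must arrange enough such external probes and then recombine them — once more through products, as in the coupling step above — to eliminate the mixed possibilities. Once the dichotomy is in hand, the statement for $U_{G,\cat{CGp}}$ follows by the same argument: allowing disconnected targets $H$ only enlarges the supply of comma-morphisms, hence of naturality constraints, while leaving the inverse-limit and structure-theoretic input untouched.
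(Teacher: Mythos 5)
Your reduction steps are sound and in places cleaner than the paper's: isolating $\alpha_{\id}$, observing via \Cref{re:enoughid} that every $\alpha_f$ is an identity-component of an induced endomorphism, and coupling two components through $d=(f,f'):G\to H\times H'$ (where naturality against the two projections indeed gives $\alpha_d=\alpha_f\times\alpha_{f'}$) correctly reduce everything to the dichotomy ``$\gamma_{\id_K}$ is trivial or an automorphism.'' The problem is that the dichotomy is where the entire content of the proposition lives, and your proposal does not prove it. You yourself flag the missing piece --- ``the final task is to show these local verdicts are forced to agree'' --- and that is not a routine verification: after probing the simple quotients and the central torus separately, nothing in your setup prevents, say, an endomorphism of $K_i$ that kills one simple factor while acting bijectively on another, or that is trivial on the central torus but not on the semisimple part. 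Moreover, for a central torus of rank $k\ge 2$ an endomorphism is an integer matrix, and ruling out the non-invertible ones requires more than excluding the power maps $t\mapsto t^m$ on individual circles (e.g.\ a matrix of determinant $\pm2$ can restrict to the identity on each coordinate circle you probe). So the structure-theoretic plan, as stated, stops exactly at the hard part.

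For comparison, the paper establishes the dichotomy by a different mechanism that handles all factors at once. First (\Cref{le:enoughsimple}) any finite set of non-trivial elements of $G$ can be separated by a morphism into a \emph{simple} compact Lie group $PSU(n)$. If $\alpha_{\id}$ had both a non-trivial value and a non-trivial kernel element, one separates these two elements simultaneously in a simple target $H$; since $\End(H)\setminus\{\mathrm{triv}\}$ consists of injections, the naturality square forces a contradiction, so each component is trivial or injective. Surjectivity is then obtained by passing to a Lie quotient where a proper non-trivial image would survive, and using that an injective endomorphism of a compact Lie group is automatically bijective. This ``separate two elements in one simple target'' trick is precisely the cross-factor rigidity your sketch is missing; without it (or a substitute), your argument is incomplete. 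Your final remark on part (b) --- that disconnected targets only add naturality constraints --- is fine as far as it goes, but it inherits the same gap.
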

\begin{proof}
  We focus on \Cref{item:cgp0} to fix ideas; the other proof is largely parallel.

  \begin{enumerate}[(1)]
  \item\label{item:trivinj} {\bf Each $\alpha_f$ is either trivial or injective.} Per \Cref{re:enoughid}, we may as well take $f=\id$.

    Let $g\in G$ be an element not annihilated by $\alpha_{\id}$, and consider a morphism
    \begin{equation}\label{eq:fcgp}
      G\xrightarrow{\quad f\quad}H\text{ in }\cat{CGp}. 
    \end{equation}
    chosen judiciously (more on this momentarily). We have a commutative diagram
    \begin{equation}\label{eq:gghh}
      \begin{tikzpicture}[auto,baseline=(current  bounding  box.center)]
        \path[anchor=base] 
        (0,0) node (l) {$G$}
        +(2,.5) node (u) {$G$}
        +(2,-.5) node (d) {$H$}
        +(4,0) node (r) {$H$.}
        ;
        \draw[->] (l) to[bend left=6] node[pos=.5,auto] {$\scriptstyle \alpha_{\id}$} (u);
        \draw[->] (u) to[bend left=6] node[pos=.5,auto] {$\scriptstyle f$} (r);
        \draw[->] (l) to[bend right=6] node[pos=.5,auto,swap] {$\scriptstyle f$} (d);
        \draw[->] (d) to[bend right=6] node[pos=.5,auto,swap] {$\scriptstyle \alpha_{f}$} (r);
      \end{tikzpicture}
    \end{equation}
    Suppose now that
    \begin{itemize}
    \item $f(\alpha_{\id}(g))$ is non-trivial;
    \item as is $f(g')$ for an arbitrary $1\ne g'\in \ker\alpha_{\id}$;
    \item and $H$ is a compact, connected, {\it simple} Lie group: one with no non-trivial proper normal subgroups or equivalently \cite[Theorem 9.90]{hm4}, no such subgroups that are {\it closed}.
    \end{itemize}
    That such $f$ exist is easily seen, and relegated to \Cref{le:enoughsimple}. The upper path in \Cref{eq:gghh} fails to annihilate $g\in G$, hence so does the lower. $H$ being simple, $\alpha_{f}$ must be one-to-one (because it cannot be trivial). But this means that the lower composition in \Cref{eq:gghh} fails to annihilate $g'$, contradicting the fact that the upper path does.

    The contradiction stems from our assumption that there are non-trivial $g'\in \ker\alpha_{\id}$, so that map must in fact be injective.

  \item\label{item:trivsurj} {\bf Each $\alpha_f$ is either trivial or surjective.} Once more, take $f=\id$. Assume, this time around, that $\alpha_{\id}$ is neither trivial nor onto.

    The subgroup $\alpha_{\id}(G)\le G$ is then proper but non-trivial. Because compact groups are inverse limits of their Lie quotients \cite[Corollary 2.36]{hm4}, we can find such a quotient \Cref{eq:fcgp} so that the image
    \begin{equation}\label{eq:propinh}
      \alpha_f(H) = f(\alpha_{\id}(G))\le H = f(G)
    \end{equation}
    is both proper and non-trivial (the first equality follows from the commutativity of \Cref{eq:gghh}). But then
    \begin{itemize}
    \item $\alpha_f$ cannot be trivial, since its image is not;
    \item hence must be injective by part \Cref{item:trivinj};
    \item and thus also surjective, because it is a one-to-one map of compact {\it Lie} groups: it restricts to an isomorphism on every connected component for dimension reasons, and a compact Lie group has finitely many connected components.
    \end{itemize}
    This latter remark, though, contradicts the properness of \Cref{eq:propinh}.

  \item {\bf Finishing the proof.} This, so far, shows that the {\it individual} components $\alpha_f$ of $\alpha$ are each either trivial or bijective. It remains to argue that we cannot have a mixture of these: if $\alpha_{\id}$ is trivial (bijective) then so, respectively, is every $\alpha_f$.
    
    Consider the commutative diagram
    \begin{equation}\label{eq:6diag}
      \begin{tikzpicture}[auto,baseline=(current  bounding  box.center)]
        \path[anchor=base] 
        (0,0) node (lu) {$G$}
        (0,-1) node (ld) {$G$}
        (2,.5) node (mu) {$G\times H$}
        (2,-1.5) node (md) {$G\times H$}
        (4,0) node (ru) {$H$}
        (4,-1) node (rd) {$H$}
        ;
        \draw[->] (lu) to[bend left=6] node[pos=.5,auto] {$\scriptstyle \iota$} (mu);
        \draw[->] (mu) to[bend left=6] node[pos=.5,auto] {$\scriptstyle \pi$} (ru);
        \draw[->] (ld) to[bend right=6] node[pos=.5,auto,swap] {$\scriptstyle \iota$} (md);
        \draw[->] (md) to[bend right=6] node[pos=.5,auto,swap] {$\scriptstyle \pi$} (rd);
        \draw[->] (lu) to[bend left=0] node[pos=.5,auto,swap] {$\scriptstyle \alpha_{\id}$} (ld);
        \draw[->] (mu) to[bend left=0] node[pos=.5,auto] {$\scriptstyle \alpha_{\iota}$} (md);
        \draw[->] (ru) to[bend left=0] node[pos=.5,auto] {$\scriptstyle \alpha_{\cat{triv}}$} (rd);
      \end{tikzpicture}
    \end{equation}
    (with $\iota$ and $\pi$ the obvious inclusion and projection).

    By claims \Cref{item:trivinj} and \Cref{item:trivsurj}, if at least one of $\alpha_{\id}$ and $\alpha_{\cat{triv}}$ is non-trivial, then $\alpha_{\iota}$ must be bijective. This, in turn, would entail both
    \begin{itemize}
    \item the injectivity of $\alpha_{\id}$;
    \item and the surjectivity of $\alpha_{\cat{triv}}$. 
    \end{itemize}
    
    Suppose first that $\alpha_{\id}$ is trivial. By \Cref{eq:gghh}, the only way for $\alpha_f$ to be non-trivial (and hence injective by \Cref{item:trivinj}) is for $f:G\to H$ itself to be trivial, so that $\alpha_f$ is the $\alpha_{\cat{triv}}$ of \Cref{eq:6diag}.

    Similarly, \Cref{eq:gghh} and the assumed bijectivity of $\alpha_{\id}$ force $f=\cat{triv}$ whenever $\alpha_f$ is trivial; we are then again in the context of \Cref{eq:6diag}.

    As just observed, then, either case entails either the injectivity of one of the maps $\alpha_{\id}$ or the surjectivity of $\alpha_f$, so that map's ($\alpha_{\id}$ or $\alpha_f$) triviality also forces the respective group to be trivial and hence the map in question is also an automorphism.

    Summary: if one of $\alpha_{\id}$ and $\alpha_{f}$ is trivial while the other is bijective, the maps are both bijective; this concludes the proof.
  \end{enumerate}
\end{proof}

The following result is presumably well known, but we isolate it here for reference in the proof of \Cref{pr:endoiso}.

\begin{lemma}\label{le:enoughsimple}
  For any finite set $F\subseteq G$ of non-trivial elements of a compact group there is a morphism $f:G\to PSU(n)$ to some projective special unitary group with $f(g)\ne 1$ for $g\in F$.

  If $G$ is Lie then $f$ can be chosen injective.
\end{lemma}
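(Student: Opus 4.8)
The plan is to build the desired homomorphism out of finite-dimensional unitary representations supplied by the Peter--Weyl theorem, and then pass to a projective quotient. It is convenient to note at the outset that $PSU(n)\cong PU(n)=U(n)/U(1)$, the quotient of the unitary group by its centre of scalar matrices: the composite $SU(n)\hookrightarrow U(n)\twoheadrightarrow PU(n)$ is onto with kernel $SU(n)\cap U(1)=Z(SU(n))$. It therefore suffices to produce a continuous morphism into some $PU(n)$ with the required properties, and the target can be relabelled as $PSU(n)$.

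For the first claim, the Peter--Weyl theorem \cite{hm4} guarantees that the finite-dimensional unitary representations of a compact group separate its points; in particular, for each $g\in F$ I can choose a representation $\rho_g\colon G\to U(n_g)$ with $\rho_g(g)\ne 1$. I would then form
\[
  \rho \;=\; \Big(\bigoplus_{g\in F}\rho_g\Big)\oplus\mathbf 1\colon\quad G\longrightarrow U(N),\qquad N=1+\sum_{g\in F}n_g,
\]
where $\mathbf 1$ denotes the one-dimensional trivial representation. For a given $g\in F$ the block $\rho_g(g)$ is non-trivial, so $\rho(g)\ne I_N$; moreover $\rho(g)$ cannot be a scalar matrix $\zeta I_N$, since the trivial summand pins $\zeta=1$ and would then force $\rho(g)=I_N$, a contradiction. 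Consequently the image of $g$ under the composite $G\xrightarrow{\ \rho\ }U(N)\twoheadrightarrow PU(N)\cong PSU(N)$ is non-trivial, which is exactly what is wanted.

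For the Lie case I would instead begin from a single faithful finite-dimensional unitary representation $\sigma\colon G\hookrightarrow U(m)$, which exists for every compact Lie group: combine Peter--Weyl with the fact that a descending intersection of the closed kernels already stabilizes, by the Noetherian behaviour of dimension and component count of compact Lie groups \cite{hm4}. Replacing $\sigma$ by $\tau=\sigma\oplus\mathbf 1\colon G\to U(m+1)$, the same scalar-matrix argument shows that $\tau(g)$ is scalar only for $g=1$: the trivial summand forces the scalar to be $1$, whereupon faithfulness of $\sigma$ forces $g=1$. Hence the kernel of the composite $G\xrightarrow{\ \tau\ }U(m+1)\twoheadrightarrow PU(m+1)\cong PSU(m+1)$ is trivial, i.e.\ the resulting $f$ is injective.

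The only genuinely delicate point is the passage from $U(n)$ to $PU(n)$: a priori an element could survive as a non-identity unitary and yet become trivial in the projective quotient by landing on a scalar matrix. The recurring device of adjoining a trivial summand $\mathbf 1$ neutralizes this, since a block-diagonal matrix carrying a separate entry $1$ is scalar only when it equals the identity; the remainder is a routine application of Peter--Weyl and of the existence of faithful representations for compact Lie groups.
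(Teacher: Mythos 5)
Your proof is correct, and it reaches the conclusion by a somewhat different route than the paper, though the crucial device is the same in spirit. The paper first reduces to $G=U(m)$ via the inverse-limit-of-Lie-quotients description of compact groups, embeds $U(m)$ into $SU(2m)$ by $x\mapsto\mathrm{diag}(x,\overline{x})$, and then maps $SU(N)$ into $SU(n)$, $n=N+N^2$, via $\rho\oplus\rho^{\otimes 2}$; the point of that particular choice is that a central element $\zeta\in\bZ/N$ acts as $\zeta$ on one block and $\zeta^2$ on the other, so it becomes scalar in $SU(n)$ only if $\zeta=1$, and the composite survives the projection to $PSU(n)$. You instead identify $PSU(n)\cong PU(n)$ (correctly, and this sidesteps the detour through $SU$), separate the finitely many points of $F$ directly by Peter--Weyl (respectively take a single faithful representation in the Lie case), and adjoin a trivial one-dimensional summand so that no non-identity image can be a scalar matrix. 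Your trivial-summand trick and the paper's $\rho\oplus\rho^{\otimes 2}$ are two instances of the same idea --- force the element to act differently on two blocks so it cannot be scalar --- but yours is arguably the more economical instance, and your treatment of the non-Lie case avoids invoking the inverse limit structure altogether in favour of plain point-separation. One small remark: since the lemma is later applied with the target required to be a \emph{simple} compact Lie group, it is worth noting that your construction always lands in $PSU(N)$ with $N\ge 2$ (the trivial summand guarantees this whenever $F\ne\varnothing$), so nothing is lost.
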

\begin{proof}
  because $G$ is the inverse limit of its Lie quotient groups \cite[Corollary 2.36]{hm4}, we certainly have a morphism $f$ to a {\it unitary} group with these properties; it is thus enough to assume that $G=U(m)$.
  

  Next, $U(m)$ further embeds into $SU(2m)$ via
  \begin{equation*}
    U(m)\ni x
    \mapsto
    \begin{pmatrix}
      x&0\\
      0&\overline{x}
    \end{pmatrix}
    \in SU(2m)
  \end{equation*}
  (with the overline denoting complex conjugation), so that we can in fact set $G=S(N)$.

  Denoting by $\rho$ the $N$-dimensional defining representation of $SU(N)$, the representation $\rho\oplus \rho^{\otimes 2}$ gives an embedding
  \begin{equation*}
    SU(N)\xrightarrow{\quad} SU(n),\ n:=N+N^2
  \end{equation*}
  with the property that the center $\bZ/N\subset SU(N)$ intersects that of $SU(n)$ trivially, so that further surjecting onto $PSU(n)$ finally gives the desired embedding (of $G$, now assumed Lie, into $PSU(n)$). 
\end{proof}

We will take it for granted that the automorphism groups of the unitary $U(n)$ are
\begin{equation}\label{eq:autun}
  \Aut(U(n))\cong PSU(n)\rtimes \bZ/2,
\end{equation}
where
\begin{itemize}
\item the {\it projective} special unitary group $PSU(n)$ is
  \begin{equation*}
    PSU(n) = U(n)/(\text{central circle }\bS^1)\cong SU(n)/(\text{central }\bZ/n)
  \end{equation*}
  is the {\it inner} automorphism group, acting on $U(n)$ by conjugation;
\item and the generator of $\bZ/2$ is complex conjugation. 
\end{itemize}

This is fairly standard, though a statement specifically to this effect seems difficult to locate in the literature. The proof is certainly no more difficult that that of the analogous result for $SU(n)$, which in turn follows from the classification of the automorphisms of the complexified Lie algebra
\begin{equation*}
  \mathfrak{sl}(n) = \mathfrak{su}(n)\otimes_{\bR}\bC,
\end{equation*}
described, say, in \cite[\S IX.5]{jc}.

\begin{proposition}\label{pr:isinn}
  Let $G\in \cat{CGp}_0$ be a compact connected group and $\alpha$ an automorphism of the forgetful functor $U_{G,\cat{CGp}_0}$

  For $f:G\to H$ the automorphism $\alpha_f\in \Aut(H)$ is inner. 
\end{proposition}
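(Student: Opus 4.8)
The plan is to reduce, via \Cref{re:enoughid}, to proving that the single component $\Phi:=\alpha_{\id}\in\Aut(G)$ is inner: for a general $f:G\to H$ the component $\alpha_f$ is the identity component $\beta_{\id}$ of an induced $\beta\in\End(U_{H,\cat{CGp}_0})$, which is again a natural automorphism by \Cref{pr:endoiso}, so the case $f=\id$ suffices. To detect innerness I would invoke the classical criterion that an automorphism of a compact connected group preserving every conjugacy class is inner: for compact \emph{Lie} groups this is the standard maximal-torus/Weyl-group argument, and the general case follows by writing $G$ as an inverse limit of its Lie quotients \cite[Corollary 2.36]{hm4} and extracting a conjugator from the resulting inverse system of nonempty compact sets. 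Since irreducible characters separate conjugacy classes, it is then enough to show $\rho_0\circ\Phi\cong\rho_0$ for every finite-dimensional irreducible unitary representation $\rho_0:G\to U(m)$.

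To this end I would feed each $\rho_0$ into the naturality of $\alpha$ after \emph{padding it with multiplicities}. Fix $N\ge 3$ and set $\rho:=\rho_0^{\oplus N}:G\to U(Nm)$, viewed as an object of $G\downarrow\cat{CGp}_0$; naturality along the morphism $\rho:(\id)\to(\rho)$ gives $\alpha_\rho\circ\rho=\rho\circ\Phi$, with $\alpha_\rho\in\Aut(U(Nm))$ an automorphism. The role of the multiplicities is that, by Schur's lemma, the commutant of $\rho(G)$ is the block $1_m\otimes U(N)$, and each $c_{1\otimes u}$ ($u\in U(N)$) \emph{fixes the object} $\rho$, i.e. $c_{1\otimes u}\circ\rho=\rho$; hence $c_{1\otimes u}$ is an automorphism of $\rho$ in the comma category, and naturality forces $\alpha_\rho$ to commute with all of them. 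Writing $\alpha_\rho=c_V\sigma^\delta$ according to \Cref{eq:autun} (with $\sigma$ complex conjugation), I will argue $\delta=0$, so $\alpha_\rho$ is inner and $\rho\circ\Phi=c_V\circ\rho\cong\rho$; cancelling the common multiplicity by Krull--Schmidt yields $\rho_0\circ\Phi\cong\rho_0$, as wanted.

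The crux, and the main obstacle, is exactly ruling out the complex-conjugation component, i.e. showing $\delta=0$. Commutation of $\alpha_\rho=c_V\sigma^\delta$ with every $c_{1\otimes u}$ inside $PSU(Nm)$ means $V(1\otimes u^{(\delta)})V^{-1}\equiv 1\otimes u$ modulo the central $\bS^1$, where $u^{(1)}=\bar u$; the resulting scalar ambiguity is a character $u\mapsto(\det u)^{k}$ of $U(N)$, so if $\delta=1$ the two $U(N)$-representations $N\cdot\overline{\mathrm{std}}$ and $N\cdot(\det^{k}\otimes\mathrm{std})$ would have to be unitarily equivalent, forcing $\overline{\mathrm{std}}\cong\det^{k}\otimes\mathrm{std}$. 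Comparing highest weights shows this is impossible once $N\ge 3$ (it would require $(0,\dots,0,-1)=(k+1,k,\dots,k)$), whence $\delta=0$. This is the step where both \Cref{eq:autun} — pinning the outer part of $\Aut(U(Nm))$ down to complex conjugation alone — and the freedom to inflate multiplicities are indispensable; the only other technical point, the passage of the conjugacy-class criterion from Lie groups to arbitrary compact connected groups, I expect to be routine through the inverse-limit presentation.
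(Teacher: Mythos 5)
Your argument is correct, and it reaches the paper's key intermediate waypoint --- that $\alpha_{\id}$ fixes the isomorphism class of every irreducible representation of $G$ --- by a genuinely different route. The paper bootstraps innerness of the unitary components in two stages: first it shows $\alpha$ acts trivially on $\bS^1$ (by embedding the circle centrally in $U(n)$ and noting that $\alpha_f$ must then centralize all of $PSU(n)$ inside $\Aut(U(n))$, per \Cref{eq:autun}), and then it feeds this through the determinant $U(n)\to\bS^1$ to conclude that $\alpha_{\id}\in\Aut(U(n))$ preserves determinants and hence is inner. Your commutant trick replaces both steps: conjugations by elements of the commutant $1_m\otimes U(N)$ of $\rho=\rho_0^{\oplus N}$ are endomorphisms of the object $\rho$ in the comma category, so naturality forces $\alpha_\rho$ to centralize them, and the highest-weight (or, more quickly, determinant) comparison rules out the conjugation component of \Cref{eq:autun} once $N\ge 3$. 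I verified the computation: if $\alpha_\rho=c_V\sigma$ then $V(1\otimes\bar u)V^{*}=(\det u)^k(1\otimes u)$ for some $k\in\bZ$, which after cancelling the multiplicity $m$ (not $N$, a harmless slip in your write-up) forces $\overline{\mathrm{std}}\cong\det^k\otimes\,\mathrm{std}$ as $U(N)$-representations, impossible for $N\ge 3$. The paper's route is shorter and needs no weight computation; yours is more self-contained in that it treats every representation uniformly without the circle/determinant detour.

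One caveat on the endgame. The step you describe as ``routine'' --- that an automorphism of a compact connected group preserving all conjugacy classes (equivalently, all irreducible characters) is inner --- is exactly the ingredient the paper outsources to \cite[Corollary 2]{mcm-dual}, and \Cref{re:needconn} stresses that this is the delicate, connectedness-dependent part of the whole proof (it fails for finite groups). Your sketch for the Lie case via maximal tori hides a real step: one must know that an automorphism of a maximal torus preserving every Weyl orbit lies in the Weyl group, which is true but not a one-liner. The inverse-limit extraction of a conjugator (using that a class-preserving automorphism fixes every closed normal subgroup setwise, hence descends to all Lie quotients) is fine. I would simply cite McMullen here, as the paper does, rather than reprove it.
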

\begin{proof}
  Note that it is enough to prove $\alpha_{\id}$ itself inner: every $\alpha_f$ as in the statement is the identity component of an automorphism of $U_{H,\cat{CGp}_0}$ induced by $\alpha$.

  The proof runs through a number of intermediate steps.
  \begin{enumerate}[(1)]
  \item {\bf The circle: $G=\bS^1$.} In this case $\alpha_{\id}$ is either the identity (which we claim is the case) or the other automorphism: $z\mapsto z^{-1}$. Consider, now, the central embedding
    \begin{equation*}
      \bS^1\xrightarrow[\cong]{\quad f\quad} (\text{diagonal matrices})\subset U(n)
    \end{equation*}
    into a unitary group and the corresponding element $\alpha_f$ of $\Aut(U(n))$.

    
    Because the image of $f$ is central in $U(n)$, further composition with an arbitrary conjugation
    \begin{equation*}
      Ad_u:=u(-)u^*\in \Aut(U(n)),\ u\in U(n)
    \end{equation*}
    will leave $f$ invariant, so the naturality of $\alpha$ gives a commutative diagram
    \begin{equation*}
      \begin{tikzpicture}[auto,baseline=(current  bounding  box.center)]
        \path[anchor=base] 
        (0,0) node (l) {$U(n)$}
        +(2,.5) node (u) {$U(n)$}
        +(2,-.5) node (d) {$U(n)$}
        +(4,0) node (r) {$U(n)$.}
        ;
        \draw[->] (l) to[bend left=6] node[pos=.5,auto] {$\scriptstyle \alpha_{f}$} (u);
        \draw[->] (u) to[bend left=6] node[pos=.5,auto] {$\scriptstyle Ad_u$} (r);
        \draw[->] (l) to[bend right=6] node[pos=.5,auto,swap] {$\scriptstyle Ad_u$} (d);
        \draw[->] (d) to[bend right=6] node[pos=.5,auto,swap] {$\scriptstyle \alpha_{f}$} (r);
      \end{tikzpicture}
    \end{equation*}
    In other words, $\alpha_f$ lies in the centralizer of $PSU(n)$ in \Cref{eq:autun}. That centralizer is easily seen to be trivial, so that $\alpha_f=\id$ and $\alpha_{\id}=\id$ by the commutativity of \Cref{eq:gghh}. 
    
  \item {\bf Unitary groups.} We are assuming now that $G = U(n)$. The preceding point shows that the automorphism $\alpha_{\det}$ associated to the determinant morphism
    \begin{equation*}
      U(n)\xrightarrow{\quad\det\quad}\bS^1
    \end{equation*}
    is trivial. But then, once more by the commutativity of \Cref{eq:gghh} (with $G=U(n)$, $H=\bS^1$ and $f=\det$), this means that $\alpha_{\id}\in \Aut(U(n))$ fixes determinants (rather than inverting them), and hence must be inner by \Cref{eq:autun}. 

  \item {\bf Arbitrary connected compact $G$.} We know from the preceding discussion that every
    \begin{equation*}
      \alpha_f\in \Aut(U(n)),\quad f:G\to U(n)
    \end{equation*}
    induced by $\alpha$ is inner, i.e. conjugation by some $u_f\in U(n)$. But then the commutativity of
    \begin{equation*}
      \begin{tikzpicture}[auto,baseline=(current  bounding  box.center)]
        \path[anchor=base] 
        (0,0) node (l) {$G$}
        +(2,.5) node (u) {$G$}
        +(2,-.5) node (d) {$U(n)$}
        +(4,0) node (r) {$U(n)$.}
        ;
        \draw[->] (l) to[bend left=6] node[pos=.5,auto] {$\scriptstyle \alpha_{\id}$} (u);
        \draw[->] (u) to[bend left=6] node[pos=.5,auto] {$\scriptstyle f$} (r);
        \draw[->] (l) to[bend right=6] node[pos=.5,auto,swap] {$\scriptstyle f$} (d);
        \draw[->] (d) to[bend right=6] node[pos=.5,auto,swap] {$\scriptstyle \alpha_{f}$} (r);
      \end{tikzpicture}
    \end{equation*}
    means that $u_f$, regarded as an automorphism of the carrier space $V\cong \bC^n$ of the $G$-representation $f$, implements an isomorphism between $f$ and the $\alpha_{\id}$-twisted representation $f\circ\alpha_{\id}$.

    This holds for arbitrary representations $f$ of $G$, so $\alpha_{\id}$ is an automorphism of the latter leaving invariant (the isomorphism classes of) all of its irreducible representations. For {\it connected} $G$ this implies that $\alpha_{\id}$ must be inner \cite[Corollary 2]{mcm-dual}.
  \end{enumerate}
  This concludes the proof.
\end{proof}

\begin{remark}\label{re:needconn}
  While I do not know whether \Cref{th:trivinner} holds for disconnected compact groups, the argument in the proof of \Cref{pr:isinn} certainly does not go through: it uses the connectedness of $G$ crucially, in concluding via \cite[Corollary 2]{mcm-dual} that automorphisms that preserve the isomorphism classes of all (or equivalently, all irreducible) representations are inner.

  For {\it finite} groups, for instance, the automorphisms with this preservation property are precisely the ones termed {\it class-preserving} in the rich literature on the topic: an equivalent characterization is that they leave every conjugacy class invariant. 

  The reader can consult, for instance, \cite{bm-class} and their references (\cite{yad-surv,wall}, and so on) for extensive discussions and examples of finite groups which admit {\it outer} class-preserving automorphisms.
\end{remark}

At this point, not much is left to do:

\pf{th:trivinner}
\begin{th:trivinner}
  By \Cref{pr:endoiso}, the statement reduces to the already-proven \Cref{pr:isinn}.
\end{th:trivinner}



\addcontentsline{toc}{section}{References}

\Addresses

\end{document}